\numberwithin{equation}{section}
\theoremstyle{definition}
\newtheorem{definition}{Definition}[section]
\newtheorem{example}[definition]{Example}
\theoremstyle{remark}
\newtheorem{remark}[definition]{Remark}
\theoremstyle{plain}
\newtheorem{theorem}[definition]{Theorem}
\newtheorem{lemma}[definition]{Lemma}
\newtheorem{result}[definition]{Result}
\newcommand{\eps}{\varepsilon}
\newcommand{\zbar}{\overline{z}}
\newcommand{\tht}{\theta}
\newcommand{\om}{\omega}
\newcommand{\bdy}{\partial}                                           
\newcommand{\OM}{\Omega}
\newcommand{\dsc}{\mathbb{D}}
\newcommand{\smoo}{\mathcal{C}}
\newcommand{\hol}{\mathcal{O}}
\newcommand{\poly}{\mathscr{P}}
\newcommand{\bcdot}{\boldsymbol{\cdot}}
\newcommand{\rl}{{\sf Re}}
\newcommand{\imag}{{\sf Im}}
\newcommand{\impl}{\Longrightarrow}
\newcommand{\lrarw}{\longrightarrow}
\newcommand\ba[1]{\overline{#1}}
\newcommand\hull[1]{\widehat{#1}}
\newcommand{\CC}{\mathbb{C}^2}
\newcommand{\cplx}{\mathbb{C}}
\newcommand{\rea}{\mathbb{R}}
\begin{document}
\title[On polynomial convexity of totally-real submanifolds]{On polynomial convexity
of compact subsets of totally-real submanifolds in $\cplx^n$}
\author{Sushil Gorai}
\address{Statistics and Mathematics Unit, Indian Statistical Institute Bangalore,
R. V. C. E. Post, Bangalore -- 560 059}
\email{sushil.gorai@gmail.com}
\thanks{This work is supported by an INSPIRE Faculty Fellowship (IFA-11MA-02) funded by DST}
\keywords{Polynomial convexity; totally real, plurisubharmonic functions}
\subjclass[2010]{Primary: 32E20}
\begin{abstract}
%It is well-known that a compact subset $K$ of $\cplx^n$ is polynomially convex 
%if and only if for every neighbourhood $U$ there exists a polynomial polyhedron that contains $K$ 
%and is contained in $U$. 
Let $K$ be a compact subset of a
totally-real manifold $M$, where 
$M$ is either a $\smoo^2$-smooth graph in $\cplx^{2n}$ over $\cplx^n$, or $M=u^{-1}\{0\}$ for 
 a $\smoo^2$-smooth submersion $u$ from $\cplx^n$ to $\rea^{2n-k}$, $k\leq n$.
In this case we show that $K$ is polynomially convex if and only if 
for a fixed neighbourhood $U$, defined in terms of the defining functions 
of $M$, there exists a plurisubharmonic function $\Psi$ on $\cplx^n$ such that 
$K\subset \{\Psi<0\}\subset U$.
%A necessary and sufficient condition 
%for polynomial convexity of a given compact subset $K$
%that lie in a totally-real manifold $M$ is provided, .
\end{abstract}
\maketitle

 \section{Introduction and statements of the results}\label{S:intro}
%{\color{green} definition and motivation for studying polynomial convexity} 
The {\em polynomially convex hull} of a compact subset $K$ of $\cplx^n$  is defined as
 $\hull{K}:=\{z\in \cplx^n : |p(z)|\leq \sup_K|p|, p\in \cplx[z_1,\dots,z_n]\}$. We say that 
 $K$ is {\em polynomially convex} if $\hull{K}=K$. 
 As a motivation for studying polynomial convexity, we discuss briefly some of its connections 
 with the theory of uniform approximation by polynomials. 
Let $\poly(K)$ denote the uniform algebra on $K$ generated 
 by holomorphic polynomials. A fundamental question in the theory of uniform algebras 
 is to characterize the compacts $K$ of $\cplx^n$ for which 
 \begin{equation}\label{E:approx}
  \poly(K)=\smoo(K),
 \end{equation}
 where $\smoo(K)$ is the class of all continuous functions on $K$.
For $K\subset \rea\subset \cplx$, \eqref{E:approx} follows from Stone-Weierstrass theorem. More generally,
Lavrentiev \cite{Lv} showed 
that {\em $K\subset\cplx$ has Property \eqref{E:approx} if and only if 
$K$ is polynomially convex and has empty interior.}
In contrast, no such characterization is available for compact subsets of $\cplx^n$, $n\geq 2$. 
 Since the maximal ideal space of $\poly(K)$, $K\subset \cplx^n$,  
is identified with $\hull{K}$  via Gelfand's theory of commutative Banach algebras (see \cite{Gamelin} for details),  we observe that 
\[
\poly(K)=\smoo(K)\impl \hull{K}=K.
\]
%Polynomial convexity, in general, is not sufficient for \eqref{E:approx}, even in $\cplx$.
With the assumption that $K$ is polynomial convex, there are several results, for instance see \cite{AIW1, AIW2, OPW, St1, W1},
that describe situations when \eqref{E:approx} holds.
Unless there is some way to determine whether $K\subset \cplx^n$, $n\geq 2$, is polynomially convex---which, in general,  is very 
difficult to determine---all of these results 
are somewhat abstract.
One such result is due to O'Farrell, Preskenis and Walsh \cite{OPW} which, in essence, says that polynomial convexity is 
sufficient for certain classes of compact subsets of $\cplx^n$ to satisfy Property~\eqref{E:approx}. More precisely:
 \begin{result}[O'Farrell, Preskenis and Walsh]\label{R:OPW}
 Let K be a compact polynomially convex subset of $\cplx^n$. Assume that $E$ is a closed subset of $K$ such that 
 $K\setminus E$ is locally contained in totally-real manifold. Then 
 \[
  \poly(K)=\{f\in \smoo(K): f|_E\in \poly(E)\}.
 \]
\end{result}
%{\color{red} This remark need to be changed accordingly}
By Result~\ref{R:OPW}, if  $K$ is a compact polynomially convex subset of a totally-real submanifold of $\cplx^n$, then 
$\poly(K)=\smoo(K)$. In view of this fact, one is motivated to focus---with the goal of polynomial convexity---on 
characterizing the class of compact subsets of $\cplx^n$ that lie locally in 
some totally-real submanifold of $\cplx^n$. 
%{\color{blue}These are precisely the totally real sets. Totally-real submanifolds are a subclass of totally-real sets.}
\smallskip

A totally-real set $M$ of $\cplx^n$ is locally 
polynomially convex at each $p\in M$, i.e., for each point $p\in M$ there exists 
a ball $B(p,r)$ in $\cplx^n$ such that $M\cap \ba{B(p,r)}$ is polynomially convex (see \cite{W2} for a proof in $\CC$ and \cite{HW, HerW} 
for a proof in $\cplx^n$, $n\geq 2$).
In general, an arbitrary compact subset of a 
 totally-real submanifold in $\cplx^n$ is not necessarily polynomially convex, as shown by the following example due to
 Wermer \cite[Example 6.1]{HW}:
 let \[M:=\{(z, f(z))\in \CC : z\in \cplx\},\] where 
 \[f(z)=-(1+i)\ba{z}+iz\ba{z}^2+z^2\ba{z}^3.\]
 It is easy 
  to see that $M$ is totally-real.
  Consider the compact subset 
  $K:=\{(z, f(z))\in \CC : z\in \ba{\dsc}\}\subset M.$
    Since $f(e^{i\tht})=0$ for $\tht\in \rea$, by using maximum modulus theorem, we infer that 
  $\hull{K}$ contains the analytic disc $\{(z,0)\in \CC : z\in \ba{\dsc}\}$. Hence, $K$ is not polynomially 
  convex. Some sufficient conditions 
  for polynomial convexity of totally-real discs in $\cplx^2$, i.e., the compact subset $\{(z,f(z))\in \cplx^2 ; z\in \ba{\dsc}\}$ of a totally 
  real graph in $\cplx^2$, in terms of the graphing function $f$, are available in the literature (see \cite{duval, OP1,OP2}, 
  and \cite{sangar} for a nice survey), but there are no general results 
 for compact subsets of $\cplx^n$, $n>2$, that we are aware of. 
  Therefore, it seems interesting to know  the conditions under which a compact subset of a totally-real submanifold of $\cplx^n$ 
 is polynomially convex. In this paper we report the results of our investigations on this question. 
 \smallskip
 
 We now present the main results of this paper. But first we state a lemma, which has a vital role in the proofs of our theorems,  about 
 the polynomially convex hull of general compact 
 subsets $\cplx^n$, which might also be of independent interest. 

\begin{lemma} \label{P:extPsh}
Let $K$ be a compact set in $\cplx^n,~n\geq 2$, and let $\phi$ be a plurisubharmonic function on $\cplx^n$ 
such that $K \subset \OM$, where $\OM:= \{z\in \cplx^n : \phi(z)< 0 \}$. Suppose there exists a non-negative 
function $v \in {\sf psh}(\OM)$ such that $v(z)=0~~\forall z \in K$. Then $\hull{K} \subseteq v^{-1}\{0\}$.
\end{lemma}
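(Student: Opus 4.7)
The plan is to argue by contradiction: assuming some $z_0 \in \hull{K}$ has $v(z_0) = a > 0$, I will construct a plurisubharmonic function $\tilde v$ on $\cplx^n$ with $\tilde v|_K \equiv 0$ and $\tilde v(z_0) = a$, contradicting the plurisubharmonic characterisation of the polynomial hull, $\hull{K} = \{z \in \cplx^n : u(z) \leq \sup_K u \text{ for every } u \in {\sf psh}(\cplx^n)\}$.

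First, a preliminary reduction. The plurisubharmonic characterisation applied to $\phi$ itself already gives $\hull{K} \subset \{\phi \leq -\dl\} \subset \OM$ where $\dl := -\sup_K\phi > 0$. In general $v$ need not be bounded on a neighbourhood of $\hull{K}$ inside $\OM$, so I replace $\phi$ by $\phi'(z) := \max\bigl(\phi(z),\,\eps(|z|^2 - R^2)\bigr)$, with $R$ chosen so that $\hull{K} \subset B(0,R/2)$ and $\eps > 0$ small. All hypotheses still hold with $\phi'$ in place of $\phi$, and now $\OM' := \{\phi' < 0\} = \OM \cap B(0,R)$ is bounded, so every closed sublevel set $\{\phi' \leq -s\}$ ($s > 0$) is compactly contained in $\OM'$.

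The main construction is a global glueing. Let $\dl' := -\sup_K \phi' > 0$, set $s_0 := \dl'/2$ and $W := \{\phi' < -s_0\}$, so that $\hull{K} \subset W$ and $V_0 := \sup_{\ba{W}} v$ is finite (and $\geq a$ by the contradiction hypothesis). Choosing $M := 2V_0/\dl'$ and $D := M\dl' = 2V_0$, so that in addition $D - Ms_0 = V_0$, I define
\[
\tilde v(z) := \begin{cases} \max\bigl(v(z),\, M\phi'(z) + D\bigr), & z \in W,\\ M\phi'(z) + D, & z \in \cplx^n \setminus W. \end{cases}
\]
Upper semicontinuity forces $\phi' \equiv -s_0$ on $\partial W$, so there $M\phi' + D = V_0 \geq \limsup_{W \ni z \to \zeta} v(z)$. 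The standard plurisubharmonic glueing lemma then certifies $\tilde v \in {\sf psh}(\cplx^n)$.

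To conclude, on $K$ we have $\phi' \leq -\dl'$, hence $M\phi' + D \leq 0$; combined with $v = 0$ this gives $\tilde v \equiv 0$ on $K$. At $z_0 \in \hull{K} \subset W$ the same bound yields $M\phi'(z_0) + D \leq 0 < a$, so $\tilde v(z_0) = a$. The plurisubharmonic characterisation then forces $\tilde v(z_0) \leq \sup_K \tilde v = 0$, contradicting $a > 0$. The main technical obstacle is calibrating $M$ and $D$: the glueing across $\partial W$ demands $D \geq V_0 + M s_0$, whereas the vanishing of $\tilde v$ on $K$ demands $D \leq M\dl'$; these are compatible only because the gap $\dl' - s_0 > 0$ separates the sublevels on which $K$ and $\partial W$ lie, and securing the finiteness of $V_0$ that makes this work is what necessitates the preliminary reduction to bounded $\OM'$.
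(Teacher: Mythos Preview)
Your proof is correct and takes a genuinely different, more elementary route than the paper. The paper first invokes a result of Catlin to produce a continuous plurisubharmonic $u$ on $\cplx^n$ with $u|_{\hull{K}} = 0$ and $u > 0$ on $\cplx^n\setminus\hull{K}$, then feeds this into Poletsky's extension lemma (Result~\ref{R:poletsky}) to obtain a global plurisubharmonic $v'$ agreeing with $v$ on $\hull{K}$; H\"ormander's characterisation (Result~\ref{R:hormander}) then forces $v' \le 0$ on $\hull{K}$, hence $v \equiv 0$ there. You instead build the global extension explicitly, via the $\max$-gluing of $v$ with an affine transform $M\phi' + D$ of the barrier itself, thereby bypassing both Catlin and Poletsky. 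The price of your hands-on construction is the preliminary reduction to a bounded $\OM'$, needed so that $V_0 = \sup_{\ba W} v$ is finite and the constants $M,D$ can be calibrated; the paper's black-box route needs no such reduction since Poletsky's lemma only asks that $v$ be bounded \emph{below} on the compact, which is automatic from $v\ge 0$. The gain is a fully self-contained argument using nothing beyond Result~\ref{R:hormander} and the standard gluing lemma. (One small remark: your assertion that $\phi'\equiv -s_0$ on $\bdy W$ does hold, but the justification is not upper-semicontinuity alone; rather, $\bdy W\subset \ba W\subset\{\phi'\le -s_0\}$ because the latter set is closed and contains $W$, while $\bdy W\cap W=\varnothing$ gives the reverse inequality.)
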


\noindent We know, from H\"{o}rmander's result \cite[Theorem~4.3.3]{H} (see Result~\ref{R:hormander} in 
Section~\ref{S:technical}), that the above is true when 
$\OM=\cplx^n$. Extending the plurisubharmonic 
function $v$ from $\OM$ to $\cplx^n$ is the key to our proof of Lemma~\ref{P:extPsh}, which 
we do by using a result of Poletsky \cite{P} (see Section~\ref{S:technical} 
for details) .
\smallskip

%Later we became aware that 
%\v Cirka, in \cite{Cir}, proved a stronger result than that in the author's dissertation but that proof 
%of \v Cirka has some flaw in it (see `added in proof' subsection of \cite{BG}).

Let $K$ be a compact subset of a totally-real graph over $\cplx^n$ in $\cplx^{2n}$. 
In this case we present a necessary and sufficient condition for polynomial 
convexity of the given compact $K$ in terms of the graphing functions:
\begin{theorem}\label{T:totreal_PCVX}
 Let $f^1,\dots, f^n: \cplx^n \lrarw \cplx$ be $\smoo^2$-smooth functions such that, writing 
$F=(f^1, \dots, f^n)$, the graph ${\sf Gr}_{\cplx^n}(F)$ is a totally real submanifold of $\cplx^{2n}$. Then, a compact subset 
$K$ of ${\sf Gr}_{\cplx^n}(F)$ is polynomially convex if and only if there exists a 
$\Psi \in {\sf psh}(\cplx^{2n})$ such that
\[
 K \subset \om \subset \left\lbrace (z,w)\in \cplx^n \times \cplx^n: 
\sum_{\nu=1}^n |f^{\nu}(z)-w_{\nu}| < \frac{m(z)}{2L(z)} \right\rbrace,
\]
where 
\begin{align}
\om&:=\{(z,w)\in \cplx^n \times \cplx^n : \Psi(z,w)<0\},\notag\\
 L(z)&:= \max_{\nu \leq n}\left[\sup_{||v||=1}|\mathfrak{L}f^{\nu}(z;v)|\right]; \quad\text{and} \notag\\
m(z)&:= \inf_{||v||=1} \left(\sum_{\nu=1}^n|f_{\ba{z_1}}^{\nu}(z) \ba{v_1}+ \dots + f_{\ba{z_n}}^{\nu}(z) \ba{v_n}|^2 \right) \notag.
\end{align}
\end{theorem}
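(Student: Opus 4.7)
The proof splits into two directions of very different character. For \emph{necessity}, suppose $K$ is polynomially convex. Total reality of $\Gr_{\cplx^n}(F)$ is equivalent to non-singularity of the matrix $[f^\nu_{\bar z_j}(z)]$ at every point, so $m(z) > 0$ on $\cplx^n$ and the set
\[
U := \bigl\{(z,w)\in\cplx^n\times\cplx^n : \textstyle\sum_{\nu=1}^n |f^\nu(z)-w_\nu| < m(z)/(2L(z))\bigr\}
\]
is a genuine open neighbourhood of $K$. I then invoke the standard fact that any polynomially convex compactum in $\cplx^N$ admits, for every open neighbourhood $U$, a plurisubharmonic function $\Psi$ on $\cplx^N$ with $K \subset \{\Psi<0\} \subset U$ --- for example $\Psi = \max_j(|p_j|^2-1)$ for finitely many polynomials $p_j$ defining a polynomial polyhedron sandwiched between $K$ and $U$ --- and this supplies the $\Psi$ demanded by the theorem.

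For \emph{sufficiency}, the plan is to set up Lemma~\ref{P:extPsh} with the concrete weight
\[
\rho(z,w) \;:=\; \sum_{\nu=1}^n |f^\nu(z)-w_\nu|^2,
\]
which is non-negative on $\cplx^{2n}$ and vanishes precisely on $\Gr_{\cplx^n}(F)$; the crucial point is that $\rho$ is plurisubharmonic on $U$, hence on $\omega \subset U$. A direct computation of the Levi form at $(z,w)$ applied to a vector $(v,u)\in\cplx^n\times\cplx^n$ produces
\[
\mathfrak{L}\rho\bigl((z,w);(v,u)\bigr) \;=\; \sum_{\nu=1}^n\Bigl[\,|P_\nu(v)-u_\nu|^2 \,+\, |Q_\nu(v)|^2 \,+\, 2\rl\bigl(\overline{g^\nu}\,\mathfrak{L}f^\nu(z;v)\bigr)\Bigr],
\]
where $g^\nu := f^\nu(z)-w_\nu$, $P_\nu(v) := \sum_j f^\nu_{z_j}(z)\,v_j$ and $Q_\nu(v) := \sum_j f^\nu_{\bar z_j}(z)\,\bar v_j$. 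The definitions of $L(z)$ and $m(z)$ then yield $\sum_\nu|Q_\nu(v)|^2 \geq m(z)\|v\|^2$ and $|\mathfrak{L}f^\nu(z;v)| \leq L(z)\|v\|^2$, which combine to give
\[
\sum_\nu|Q_\nu(v)|^2 \,+\, 2\rl\!\sum_\nu \overline{g^\nu}\,\mathfrak{L}f^\nu(z;v) \;\geq\; \|v\|^2\Bigl(m(z) - 2L(z)\!\sum_\nu|f^\nu(z)-w_\nu|\Bigr),
\]
and the right-hand side is non-negative exactly on $U$. Consequently $\rho \in {\sf psh}(\omega)$.

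With $\rho$ in place, Lemma~\ref{P:extPsh} applied with $\phi = \Psi$ and $v = \rho$ forces $\hull{K} \subset \rho^{-1}\{0\}\cap\omega \subset \Gr_{\cplx^n}(F)$; that is, the compact polynomially convex set $\hull{K}$ lies entirely in a totally-real submanifold of $\cplx^{2n}$. Result~\ref{R:OPW}, applied with closed subset $E = \emptyset$, then gives $\poly(\hull{K}) = \smoo(\hull{K})$. A Urysohn-plus-approximation argument closes the proof: if some $p \in \hull{K}\setminus K$ existed, Urysohn would furnish a continuous function $f$ on $\hull{K}$ with $f(p) = 1$ and $f|_K = 0$, and approximating $f$ uniformly within $1/3$ by a polynomial $q$ would give $|q(p)| > 2/3 > \sup_K|q|$, contradicting $p \in \hull{K}$. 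I expect the principal technical obstacle to be the Levi-form computation together with the careful bookkeeping that identifies the curvature correction with the bound $2L(z)\sum_\nu|f^\nu(z)-w_\nu|$ built into the definition of $U$.
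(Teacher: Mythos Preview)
Your proposal is correct and follows essentially the same route as the paper: the same weight $\rho(z,w)=\sum_\nu|f^\nu(z)-w_\nu|^2$, the same Levi-form computation isolating the $|Q_\nu(v)|^2$ term against the curvature correction $2L(z)\sum_\nu|g^\nu|$, and the same appeal to Lemma~\ref{P:extPsh} to force $\hull{K}\subset\Gr_{\cplx^n}(F)$. Your closing Urysohn argument is precisely the content of the paper's Lemma~\ref{L:totrlsub} applied directly to the pair $K\subset\hull{K}$, which is in fact slightly cleaner than the paper's own Step~III, where an auxiliary compact $K_1$ is introduced before Lemma~\ref{L:totrlsub} is invoked.
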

\noindent Here, and in what follows, $\mathfrak{L}f(z;.)$ denotes the Levi-form of a $\smoo^2$-smooth function $f$
at $z$. We now make a couple of remarks that will aid the understanding of the statement of  Theorem~\ref{T:totreal_PCVX}.

\begin{remark}\label{rem-tube}
The radius of the tube-like set 
\[\left\lbrace (z,w)\in \cplx^n \times \cplx^n: 
\sum_{\nu=1}^n |f^{\nu}(z)-w_{\nu}| < \frac{m(z)}{2L(z)} \right\rbrace\] 
may vary pointwise in $\cplx^{2n}$ but
the totally-real assumption on the graph ${\sf Gr}_{\cplx^n}(F)$  ensures that $m(z)\neq 0$ for $z\in \cplx^n$ 
(see Lemma~\ref{L:totreal} in Section~\ref{S:technical}).  
Therefore, the above tube-like set is a nonempty open subset of $\cplx^{2n}$ containing the compact $K$.
\end{remark}

\begin{remark}
We observe that if, in addition, we assume that the functions $f_1, \dots, f_n$  in Theorem~\ref{T:totreal_PCVX} are pluriharmonic, 
then the above tubular neighbourhood has infinite radius 
 at each point of ${\sf Gr}_{\cplx^n}(F)$. We just choose $\omega$ to be 
 a suitable polydisc containing $K$, $K\subset {\sf Gr}_{\cplx^n}(F)$,  
 such that the conditions of Theorem~\ref{T:totreal_PCVX} are satisfied.
 Thus, any compact subset of such a graph is 
 polynomially convex.
\end{remark}

\noindent We would like to mention that Theorem~\ref{T:totreal_PCVX} is a generalization of a 
result \cite[Theorem 6.1]{SG}---which characterizes 
polynomial convexity of graphs over polynomially convex subset $\ba{\OM}$, where $\OM$ is a bounded domain in $\cplx^n$---in author's 
dissertation. 
\smallskip

We now consider the case when the compact $K$ lies 
in a totally-real submanifold which is a level set of a $\smoo^2$-smooth submersion on $\cplx^n$. 
\begin{theorem}\label{T:totrlpoly}
 Let $M$ be a $\smoo^2$-smooth totally-real submanifold 
of $\cplx^n$ of real dimension $k$ such that $M:=\rho^{-1}\{0\}$, where $\rho:=(\rho_1,\dots,\rho_{2n-k})$ 
is a submersion from $\cplx^n$ to $\rea^{2n-k}$, and
 $K$ is a compact subset of $M$. Then $K$ is 
 polynomially convex if and only if there exists $\Psi\in {\sf psh}(\cplx^n)$ such that 
 \[
  K\subset \omega \subset \left\lbrace z\in \cplx^n : \sum_{l=1}^{2n-k}|\rho_l(z)|<\dfrac{m(z)}{L(z)} \right\rbrace,
 \]
 where 
 \begin{align}
 \omega&:=\{z\in \cplx^n : \Psi(z)<0\},\notag\\
L(z)&:=\max_{l\leq 2n-k} \left(\sup_{||v||=1}|\mathfrak{L}\rho_l(z, v)|\right);\quad \text{and}\notag\\
 m(z)&:=\inf_{||v||=1}\sum_{l=1}^{2n-k} \left|\sum_{j=1}^n\bdy_{\ba{z_j}} \rho_l (z)v_j\right|^2.\notag
 \end{align}
\end{theorem}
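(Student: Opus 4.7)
The plan is to handle the two directions of the equivalence separately. For necessity, I would first note that $K \subset M \subset U := \{z \in \cplx^n : \sum_l |\rho_l(z)| < m(z)/L(z)\}$: indeed, $\rho$ vanishes on $M$, $L(z)$ is finite everywhere since $\rho$ is $\smoo^2$, and $m(z) > 0$ on $M$ by the totally-real hypothesis (a submersion analog of the Lemma~\ref{L:totreal} cited in Remark~\ref{rem-tube}). Then, since $K$ is a polynomially convex compact sitting inside the open set $U$, polynomial polyhedra form a neighborhood basis, and I would select polynomials $p_1, \ldots, p_N$ and constants $c_j > 0$ with $K \subset P := \{z : |p_j(z)| < c_j, \, j=1,\dots,N\} \subset U$, and set $\Psi(z) := \max_j (|p_j(z)|^2/c_j^2) - 1$, which is plurisubharmonic on $\cplx^n$ with $\{\Psi<0\}=P$.

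For sufficiency, the strategy is to apply Lemma~\ref{P:extPsh} to the natural candidate $v(z) := \sum_l \rho_l(z)^2$, which is $\smoo^2$-smooth, non-negative, and vanishes exactly on $M$. The first and main step is to verify that $v$ is plurisubharmonic on $U$ (hence on $\omega \subseteq U$). A direct computation, using that each $\rho_l$ is real-valued, gives
\begin{equation*}
  \mathfrak{L} v(z; \xi) \;=\; 2 \sum_l \Bigl|\sum_j (\rho_l)_{z_j}(z)\, \xi_j\Bigr|^2 \;+\; 2 \sum_l \rho_l(z)\, \mathfrak{L} \rho_l(z; \xi).
\end{equation*}
The first sum is $\geq 2 m(z)\|\xi\|^2$ by the definition of $m$ (after observing that, for real $\rho_l$, $\sum_j \partial_{\bar z_j}\rho_l\, v_j$ is the complex conjugate of $\sum_j \partial_{z_j}\rho_l\, \bar v_j$), while the second is $\geq -2L(z)\|\xi\|^2 \sum_l |\rho_l(z)|$. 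Combining, $\mathfrak{L} v(z;\xi) \geq 2\|\xi\|^2 (m(z) - L(z)\sum_l |\rho_l(z)|) > 0$ on $U$, so $v$ is (strictly) plurisubharmonic there.

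With this in hand, Lemma~\ref{P:extPsh} (applied with $\phi = \Psi$ and the above $v$) yields $\hull{K} \subseteq v^{-1}\{0\} = M \cap \omega \subseteq M$. To reduce $\hull{K} \subseteq M$ to $\hull{K} = K$, I would invoke Result~\ref{R:OPW} on the polynomially convex compact $\hull{K}$ with $E = \emptyset$ (valid because $\hull{K} \subseteq M$ is locally contained in the totally-real manifold $M$), obtaining $\poly(\hull{K}) = \smoo(\hull{K})$. If $\hull{K} \neq K$, Tietze--Urysohn yields a nonzero $f \in \smoo(\hull{K}) = \poly(\hull{K})$ vanishing on $K$; uniform polynomial approximations $p_n \to f$ on $\hull{K}$ then satisfy $\sup_K |p_n| = \sup_{\hull{K}} |p_n| \to \|f\|_\infty > 0$, while at the same time $\sup_K |p_n| = \sup_K |p_n - f| \to 0$, a contradiction. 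Hence $\hull{K} = K$.

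The essential technical difficulty has been packaged into Lemma~\ref{P:extPsh}, which handles the extension of the psh function $v$ from $\omega$ to $\cplx^n$. Granting that lemma, the main step is the Levi-form estimate above; it identifies $U$ as precisely the region on which $v = \sum\rho_l^2$ is plurisubharmonic, which is what makes the theorem's tube-like condition so natural. The rest---the application of Result~\ref{R:OPW} and the defining property of the polynomial hull---is a short formal argument.
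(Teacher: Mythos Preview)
Your proof is correct and follows the same strategy as the paper: define $v = \sum_l \rho_l^2$, verify via the Levi-form estimate that $v$ is plurisubharmonic on the tube, apply Lemma~\ref{P:extPsh} to obtain $\hull{K} \subset v^{-1}\{0\}=M$, and finish via Result~\ref{R:OPW}. Your closing argument (apply Result~\ref{R:OPW} directly to the polynomially convex compact $\hull{K}$, then use Tietze and the hull identity $\sup_K|p|=\sup_{\hull{K}}|p|$) is a mild streamlining of the paper's Step~III, which instead passes through an auxiliary set $K_1 = M \cap \{\Psi \leq -\varepsilon\}$ and Lemma~\ref{L:totrlsub}; the underlying idea (OPW plus Tietze) is the same, and your route avoids having to verify that $K_1$ is compact.
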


\begin{remark}
It is well known that a compact subset $K\subset \cplx^n$ 
is polynomially convex if and only if, for {\em every} neighbourhood $U$, there exists a polynomial 
polyhedron that contains the compact  and lies inside $U$. From Theorem~\ref{T:totrlpoly} 
 we conclude that  for a compact subset $K$ of a 
totally-real submanifold of $\cplx^n$ to be polynomially 
convex it suffices that, for {\em a single} fixed neighbourhood $U$ depending on the defining 
equations, we can find a polynomial polyhedron that contains $K$ and is contained in $U$. 
\end{remark}

%<<{\color{red} add a remark about the stein neighbourhood basis of the submanifold $M$}>>

As in Remark~\ref{rem-tube}, the fact that $\rho^{-1}\{0\}$ is totally real ensures that $m(z)\neq 0$, for 
all $z\in K$ (see Lemma~\ref{L:chartotrl} in Section~\ref{S:technical}); the set 
$\left\lbrace z\in \cplx^n : \sum_{l=1}^{2n-k}|\rho_l(z)|<\dfrac{m(z)}{L(z)} \right\rbrace$ is 
an open set containing $K$.
\smallskip

 We conclude the section with an observation about polynomial convexity
of compact subsets that lie in an arbitrary totally-real submanifold of $\cplx^n$, and not \emph{just} a zero set of 
a submersion defined on \emph{all of} $\cplx^n$.  
\begin{remark}
An abstract result analogous to Theorem~\ref{T:totrlpoly} holds for compacts that lie in any arbitrary totally-real 
submanifold of $\cplx^n$. The construction of a suitable tubular 
neighbourhood that will replace the tube-like neighbourhood in Theorem~\ref{T:totrlpoly}  is the main obstacle, 
which can be overcome by using partitions of unity. In this case, locally, we have real valued 
$\smoo^2$-smooth functions $\rho_1,\dots,\rho_{2n-k}$ such that the submanifold can be viewed locally 
as the zero set of a submersion $\rho=(\rho_1,\dots,\rho_{2n-k})$;
thus, locally we get a neighbourhood defined in terms of $\rho_1,\dots, \rho_{2n-k}$ as in Theorem~\ref{T:totrlpoly}. 
The problem with the result that we will end up with is that, since the tube $\omega$ would be given in terms of 
local data and (highly non-unique) cut-off functions, it would be merely an abstraction. Of course, highly abstract 
characterisations of polynomial convexity, in the language of uniform algebras, \emph{already} exist---but hard to 
check. The point of this paper is to begin with some natural overarching assumption and derive characterisations for polynomial 
convexity that are checkable. A couple of examples of totally-real submanifolds are 
given in Section~\ref{S:example} as applications of Theorem~\ref{T:totreal_PCVX} 
and Theorem~\ref{T:totrlpoly}.
\end{remark}

\section{Technical Results}\label{S:technical}
In this section, we prove some results that will be used in the proofs of our theorems: Lemma~\ref{P:extPsh}, a result about 
closed subsets of polynomially convex compact sets (Lemma~\ref{L:totrlsub}), and two results 
characterizing when a submanifold of $\cplx^n$ is totally real 
(Lemma~\ref{L:totreal} and Lemma~\ref{L:chartotrl}). 
We begin with the proof of Lemma~\ref{P:extPsh}. For that we need a result  
by H\"{o}rmander~\cite[Theorem~4.3.4]{H} that will be used several times in this paper. 

\begin{result}[H\"{o}rmander, Lemma 4.3.4, \cite{H}]\label{R:hormander}
Let $K$ be a compact subset of a pseudoconvex open set $\OM \subset \cplx^n$. Then
$\hull{K}_{\OM} = \hull{K}_{\OM}^P $, where $\hull{K}_{\OM}^P:=\left\lbrace z \in \cplx^n: u(z) \leq 
 \sup\nolimits_{K} u \;\forall u \in {\sf psh}(\OM) \right\rbrace$ and $\hull{K}_\OM:= 
 \left\lbrace z\in \cplx^n : |f(z)|\leq \sup\nolimits_{z\in K} |f(z)|\;\forall f\in \hol(\OM)\right\rbrace$.
\end{result}

\noindent We note that if $\OM= \cplx^n$ then Result \ref{R:hormander} says that 
the polynomially convex hull of $K$ is equal to the plurisubharmonically convex hull of $K$. 
\smallskip

We also need a result by Poletsky \cite[Lemma~4.1]{P} for proving Lemma~\ref{P:extPsh} that gives 
a sufficient condition for extending a plurisubharmonic function to $\cplx^n$. 

\begin{result}[Poletsky, Lemma 4.1, \cite{P}]\label{R:poletsky}
Let $K$ be a compact subset of an open set $V\subseteq {\cplx}^n$ and assume that there is a continuous 
plurisubharmonic function $u$ on $V$ such that $u=0$ on $K$ and positive on $V \setminus K$. If $v$ 
is a plurisubharmonic function defined on a neighbourhood $W \subset V$ of $K$ and bounded below on $K$, 
then there exists a plurisubharmonic function $v'$ on $V$ which coincides with $v$ on $K$.
\end{result}

We are now in a position to present the proof of Proposition~\ref{P:extPsh}.
\begin{proof}[Proof of Proposition~\ref{P:extPsh}]
We are given that $K \Subset \OM=\{z\in \cplx^n : \phi(z)<0\}$. Hence, by the Result \ref{R:hormander},
 $\hull{K} \subset \OM$. Upper-semicontinuity of $\phi$ gives $\hull{K} \Subset \OM$. Since $\hull{K}$ 
is polynomially convex, it follows from a result of Catlin \cite{C} (see \cite[Proposition~1.3]{S} also)
that there exists a continuous plurisubharmonic function $u$ on $\cplx^n$ such that $u=0$ on $\hull{K}$ and $u>0$ on
$\cplx^n \setminus \hull{K}$. Since, by hypothesis, $v \in {\sf psh}(\OM)$ is bounded below
on $\hull{K}$, the conditions of Result~\ref{R:poletsky} are fulfilled with $W:= \OM$ and $V:= \cplx^n$. Hence, 
there exists a plurisubharmonic function $v'$ on $\cplx^n$ such that
\begin{equation} \label{E:extnpsh}
v'(z)=v(z) \quad \forall z \in \hull{K}.
\end{equation}
Now, in view of the fact that $v=0$ on $K$ and $K \subset \hull{K}$, $v'(z)=0 \; \forall z \in K$.
At this point, using Result \ref{R:hormander} of H\"{o}rmander,  
 it follows that 
\[
 v'(z) \ \leq \ \sup_{z \in K} v'(z)=0 \quad \forall z \in \hull{K}.
\]
Thus, in view of \eqref{E:extnpsh}, we get
\[
 v(z) \ \leq \ 0 \quad\forall z \in \hull{K}.
\]
Since $v\geq 0$ (by hypothesis), we have $v(z)=0$ for all $z \in \hull{K}$. 
\end{proof}

\begin{lemma}\label{L:totrlsub}
 Let $K$ be a compact polynomially convex subset of a totally-real submanifold of $\cplx^n$. Then any 
 closed subset of $K$ is polynomially convex.  
\end{lemma}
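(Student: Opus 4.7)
The plan is to reduce the statement to the approximation identity $\poly(K)=\smoo(K)$, which is available here via Result~\ref{R:OPW} since $K$ is polynomially convex and locally (in fact globally) contained in a totally-real submanifold.

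First I would observe that if $E\subset K$ is closed, then $E$ is compact and, by the monotonicity of polynomial hulls under inclusion, $\hull{E}\subseteq \hull{K}=K$. Consequently, to show $\hull{E}=E$, it suffices to prove that every point of $K\setminus E$ fails to lie in $\hull{E}$.

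So fix $z_0\in K\setminus E$. Since $E$ is a closed subset of the compact Hausdorff space $K$ and $z_0\notin E$, Urysohn's lemma yields a function $f\in \smoo(K)$ with $f(z_0)=1$ and $f|_E\equiv 0$. Invoking Result~\ref{R:OPW} with the closed subset taken to be empty, $\poly(K)=\smoo(K)$, so there exists a holomorphic polynomial $p$ with $\|p-f\|_K<1/3$. This gives
\[
|p(z_0)|>\tfrac{2}{3}>\tfrac{1}{3}>\sup_{z\in E}|p(z)|,
\]
which shows $z_0\notin \hull{E}$. Hence $\hull{E}\subseteq E$, and since the reverse inclusion is automatic, $E$ is polynomially convex.

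I do not expect any substantial obstacle here: the content of the argument is entirely packaged inside Result~\ref{R:OPW}, and once that approximation theorem is invoked, the separation of $z_0$ from $E$ by a polynomial is a routine Urysohn-plus-approximation step. The only mild subtlety is to notice that $\hull{E}$ cannot escape $K$, so one only needs to separate points \emph{inside} $K$ from $E$, rather than arbitrary points of $\cplx^n$.
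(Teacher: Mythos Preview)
Your proof is correct and follows essentially the same route as the paper: both invoke Result~\ref{R:OPW} to obtain $\poly(K)=\smoo(K)$ and then use an elementary continuous-function argument to pass to the closed subset. The paper packages the last step slightly differently---it applies Tietze extension to deduce $\poly(L)=\smoo(L)$ directly (whence polynomial convexity follows), rather than your Urysohn-plus-point-separation argument---but the substance is the same.
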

\begin{proof}
Since $K$ is a polynomially convex subset of a totally-real submanifold of $\cplx^n$, 
 we apply Result~\ref{R:OPW} to get $\poly(K)=\smoo(K)$. 
 Let $L$ be a closed subset of $K$. By Tietze extension theorem, 
 $\smoo(K)|_L=\smoo(L)$. Since $\poly(K)|_L\subset \poly(L)\subset \smoo(L)$, we have $\poly(L)=\smoo(L)$. 
 Hence, $L$ is polynomially convex.
\end{proof}

We now state a result due to Oka (see \cite[Lemma 2.7.4]{H}) that 
gives us one direction of the implications in both the theorems in this paper.
\begin{result}[Oka]\label{R:polyhed}
 Let $K$ be a compact polynomially convex set in $\cplx^n$ and let $U$ be a 
 neighbourhood of $K$. Then there exist finitely many polynomials $p_1, \dots, p_m$ such that 
 \[
  K\subset \{z\in \cplx^n : |p_j(z)|\leq 1,\; j=1, \dots, m\} \subset U.
 \]
\end{result}

Let $M$ be a $\smoo^2$-smooth real submanifold of $\cplx^n$ of real dimension $k$, $k\leq n$. For each $p\in M$ 
there exists a neighbourhood $U_p$ of $p$ in $\cplx^n$ and $\smoo^2$-smooth real-valued functions 
$\rho_1,\rho_2, \dots, \rho_{2n-k}$ such that 
\[
 U_p\cap M=\{z\in U_p: \rho_1(z)=\rho_2(z)=\dots=\rho_{2n-k}(z)=0\},
\]
where $\rho=(\rho_1,\dots,\rho_{2n-k})$ is a submersion.
With these notations we now state the following lemma.
\begin{lemma}\label{L:chartotrl}
$M$ is totally real at $p\in M$ 
if and only if the matrix $A_p$ is of rank $n$, where
\[
A_p:=\begin{pmatrix}
  \bdy_{\ba{z_1}}\rho_1(p)\;\; \bdy_{\ba{z_2}}\rho_1(p)\;\; \cdots \bdy_{\ba{z_n}}\rho_1(p)\\
  \bdy_{\ba{z_1}}\rho_2(p)\;\; \bdy_{\ba{z_2}}\rho_2(p)\;\; \cdots \bdy_{\ba{z_n}}\rho_2(p)\\
  \vdots \\
  \bdy_{\ba{z_1}}\rho_{2n-k}(p)\;\; \bdy_{z_2}\rho_{2n-k}(p)\;\; \cdots \bdy_{\ba{z_n}}\rho_{2n-k}(p)
  \end{pmatrix}.
  \]
\end{lemma}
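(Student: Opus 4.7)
\emph{Proof proposal.} The plan is to unwind the definition of total reality at $p$---namely that $T_pM\cap J(T_pM)=\{0\}$, where $J$ denotes multiplication by $i$ on $T_p\cplx^n$---into a linear-algebra statement about the kernel of $A_p$. The real tangent space $T_pM$ is cut out by the system $d\rho_l(v)=0$, $l=1,\dots,2n-k$, and we want to detect when a nonzero $v\in T_pM$ also has $Jv\in T_pM$.

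First I would write an arbitrary real tangent vector at $p$ in the Wirtinger basis as
\[
 v \ = \ \sum_{j=1}^n \bigl( w_j\, \bdy_{z_j} + \ba{w_j}\, \bdy_{\ba{z_j}}\bigr),
\]
with $w=(w_1,\dots,w_n)\in\cplx^n$ the complex parameter; under this identification $Jv$ corresponds to the parameter $iw$. Since each $\rho_l$ is real-valued, $\bdy_{z_j}\rho_l(p)=\ba{\bdy_{\ba{z_j}}\rho_l(p)}$, so a short computation yields
\begin{align*}
v(\rho_l) &\ =\ 2\,\rl\!\left(\sum_{j=1}^n \ba{w_j}\,\bdy_{\ba{z_j}}\rho_l(p)\right),\\
(Jv)(\rho_l) &\ =\ 2\,\imag\!\left(\sum_{j=1}^n \ba{w_j}\,\bdy_{\ba{z_j}}\rho_l(p)\right).
\end{align*}
Therefore $v\in T_pM$ and $Jv\in T_pM$ simultaneously if and only if $\sum_{j=1}^n \ba{w_j}\,\bdy_{\ba{z_j}}\rho_l(p)=0$ for every $l=1,\dots,2n-k$; equivalently $\ba{w}\in\ker A_p$, where $A_p$ is regarded as a $\cplx$-linear map $\cplx^n\to\cplx^{2n-k}$.

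Consequently $T_pM\cap J(T_pM)=\{0\}$ is equivalent to $\ker A_p=\{0\}$, which (since $A_p$ is a $(2n-k)\times n$ matrix) holds precisely when $\operatorname{rank} A_p=n$. There is no substantive obstacle in this argument; the only point that requires a little care is the bookkeeping above that encodes the two real conditions $v(\rho_l)=0$ and $(Jv)(\rho_l)=0$ into the single complex equation $A_p\,\ba{w}=0$.
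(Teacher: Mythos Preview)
Your argument is correct and follows essentially the same route as the paper's proof: both show that a nonzero $v$ with $v,\,iv\in T_pM$ corresponds exactly to a nonzero element of $\ker A_p$. The paper carries this out by writing $v_j=v'_j+iv''_j$, expanding $\bdy_{\ba{z_j}}\rho_l$ in real coordinates $x_j,y_j$, and reading off the two real conditions $d\rho_l(v)=0$ and $d\rho_l(iv)=0$ as the real and imaginary parts of $A_pv=0$; you do the same computation but stay in the Wirtinger formalism, which makes the bookkeeping a bit cleaner and lets you handle both implications at once as a chain of equivalences rather than two contrapositive arguments.
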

 
\begin{proof}
Viewing $\cplx^n$ as $\rea^{2n}$, the tangent space $T_pM$ can be described as: 
\[
 T_pM=\left\lbrace v \in \rea^{2n} :  
 D\rho(p)v=0
 \right\rbrace.
\]

We first assume that $M$ is totally real at $p\in M$. We will show that the rank of $A_p$ is $n$.
 Suppose the matrix $A_p$ has rank less than $n$. Without loss of generality, we may 
 assume that the rank of $A_p$ is $n-1$. Hence, there exists 
 $v=(v_1,\dots,v_n)\in \cplx^n\setminus \{0\}$ such that 
 \[
  A_pv=0.
 \]
This implies that the system of linear equation 
 \begin{align}
  \sum_{j=1}^n\dfrac{\bdy \rho_l}{\bdy \ba{z_j}}(p)v_j = & 0, \;l=1,\dots,{2n-k}, \label{E:sys}
  %\sum_{j=1}^n\dfrac{\bdy \rho_2}{\bdy \ba{z_j}}(p)v_j = & 0 \notag \\
  %\qquad \vdots  & {} \notag \\
  %\sum_{j=1}^n\dfrac{\bdy \rho_n}{\bdy \ba{z_j}}(p)v_j = & 0 \label{E:sys}
 \end{align}
has a nonzero solution. Viewing $v_j=v'_j+iv_j^{''}$, $j=1,\dots, n$, and writing 
the system of equations \eqref{E:sys} in terms 
of real coordinates,
we obtain: for each $l=1,\dots, 2n-k$, 
\begin{align}
 \sum_{j=1}^n\left(\dfrac{\bdy \rho_l}{\bdy x_j}(p)+i\dfrac{\bdy \rho_l}{\bdy y_j}(p)\right)(v'_j+iv''_j)& 
 =0\notag\\
 \Longleftrightarrow \sum_{j=1}^n\left(\dfrac{\bdy \rho_l}{\bdy x_j}(p)v'_j
 -\dfrac{\bdy \rho_l}{\bdy y_j}(p)v''_j\right) & =0;  \label{E:ivtan} \\
% \text{and}\notag\\
 \qquad\qquad \sum_{j=1}^n\left(\dfrac{\bdy \rho_l}{\bdy x_j}(p)v''_j
 +\dfrac{\bdy \rho_l}{\bdy y_j}(p)v'_j \right)& =0.\label{E:vtan} 
\end{align}
In view of \eqref{E:vtan}, we get that the vector $v=(v'_1,v''_2,\dots,v'_n,v''_n)$
 lies in $T_pM$, and the equations in \eqref{E:ivtan} ensure that $iv\in T_pM$ 
  (viewing $v=(v'_1+iv''_1, \dots, v'_n+iv''_n)\in \cplx^n$). 
  This is a contradiction to the fact that $M$ is totally real at $p$. 
  
  For the converse, assume the matrix $A_p$ has rank $n$. We show that $M$ is 
  totally real at $p\in M$. Suppose $M$ is not totally real at $p$, i.e.,
  there exists a $v\in T_pM$, $v\neq 0$, such that $iv\in T_pM$. This implies that equations \eqref{E:vtan} and 
  \eqref{E:ivtan} hold. Hence, 
   $A_pv=0$,
which contradicts the fact that rank of $A_p$ is $n$. Hence, $M$ is totally real at $p$.
\end{proof} 
 
Next we state a lemma that gives a characterization  for a graph in $\cplx^{2n}$, using the 
graphing functions, to have complex tangents.
\begin{lemma}\label{L:totreal}
  Let $f^1,\dots, f^n: \cplx^n \lrarw \cplx$ be $\smoo^1$-smooth functions. Let
$M:= \{(z,w)\in \cplx^{2n} : w_\nu=f^{\nu}(z), \; \nu=1, \dots, n\}$. Let $P:=(a,f^1(a), \dots, f^n(a)) \in M$. 
Then, $M$ has a complex tangent at $P\in M$ if and only if there exists a vector  
$(v_1, \dots, v_n)\in \cplx^n \setminus \{0\}$ such that
\[
 \sum_{j=1}^n v_j {\frac{\bdy f^\nu}{\bdy \ba{z_j}}(a)}=0 \quad \forall \nu=1, \dots, n.
\]
\end{lemma}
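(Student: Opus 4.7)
The plan is to reduce Lemma~\ref{L:totreal} to Lemma~\ref{L:chartotrl} by writing the graph $M$ as the zero set of a natural submersion on $\cplx^{2n}$ and then simplifying the resulting rank condition. Since $M$ has real dimension $2n$ inside $\cplx^{2n}$, we need $2(2n)-2n=2n$ real defining functions; the obvious choice is $\rho=(\rho_1,\dots,\rho_{2n})\colon\cplx^{2n}\to\rea^{2n}$ given by
\[
\rho_\nu(z,w):=\rl\bigl(w_\nu-f^{\nu}(z)\bigr),\qquad \rho_{n+\nu}(z,w):=\imag\bigl(w_\nu-f^{\nu}(z)\bigr),\quad\nu=1,\dots,n,
\]
which is easily seen to be a submersion with zero set exactly $M$. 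By Lemma~\ref{L:chartotrl}, $M$ has a complex tangent at $P$ (equivalently, fails to be totally real there) if and only if the $(2n)\times(2n)$ matrix $A_P$ formed by the $\bdy_{\ba{z_j}}$- and $\bdy_{\ba{w_j}}$-derivatives of $\rho_1,\dots,\rho_{2n}$ at $P$ admits a nonzero kernel vector in $\cplx^{2n}$.

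Next, I would simplify $A_P$ by an invertible complex-linear row operation: for each $\nu$, replace the pair $(\rho_\nu,\rho_{n+\nu})$ by $g^{\nu}:=\rho_\nu+i\rho_{n+\nu}=w_\nu-f^{\nu}(z)$ and $\ba{g^{\nu}}=\rho_\nu-i\rho_{n+\nu}=\ba{w_\nu}-\ba{f^{\nu}(z)}$. Since the transition from $(\rho_\nu,\rho_{n+\nu})$ to $(g^{\nu},\ba{g^{\nu}})$ is invertible, this preserves the rank of $A_P$. A direct computation yields
\[
\bdy_{\ba{z_j}}g^{\nu}=-\bdy_{\ba{z_j}}f^{\nu}(a),\quad \bdy_{\ba{w_j}}g^{\nu}=0,\quad \bdy_{\ba{z_j}}\ba{g^{\nu}}=-\ba{\bdy_{z_j}f^{\nu}(a)},\quad \bdy_{\ba{w_j}}\ba{g^{\nu}}=\delta_{j\nu}.
\]

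Writing a putative kernel vector as $(v,v'')\in\cplx^n\times\cplx^n$, with the first block indexed by the $\bar{z}$-coordinates and the second by the $\bar{w}$-coordinates, the rows coming from $\ba{g^{\nu}}$ impose $v''_\nu=\sum_{j=1}^n\ba{\bdy_{z_j}f^{\nu}(a)}\,v_j$, which determines $v''$ uniquely in terms of $v$, while the rows coming from $g^{\nu}$ reduce precisely to $\sum_{j=1}^n v_j\,\bdy_{\ba{z_j}}f^{\nu}(a)=0$ for each $\nu=1,\dots,n$. Since $v''=0$ whenever $v=0$, a nontrivial kernel exists if and only if there is a nonzero $v=(v_1,\dots,v_n)\in\cplx^n$ satisfying the displayed system—exactly the condition claimed in the lemma. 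The only step requiring any care is the justification that passing from the real defining functions $\rho_\nu,\rho_{n+\nu}$ to the complex combinations $g^{\nu},\ba{g^{\nu}}$ preserves the rank of $A_P$; beyond this, the argument is a routine linear-algebra unpacking of Lemma~\ref{L:chartotrl} in the graph setting.
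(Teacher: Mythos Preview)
Your argument is correct. The reduction to Lemma~\ref{L:chartotrl} via the natural defining functions $\rho_\nu=\rl(w_\nu-f^\nu)$, $\rho_{n+\nu}=\imag(w_\nu-f^\nu)$ is legitimate (note that Lemma~\ref{L:chartotrl} is stated under a $\smoo^2$ hypothesis, while the present lemma assumes only $\smoo^1$; but the proof of Lemma~\ref{L:chartotrl} nowhere uses second derivatives, so this is harmless), and the invertible complex row operation $(\rho_\nu,\rho_{n+\nu})\mapsto(g^\nu,\ba{g^\nu})$ does preserve the complex rank of $A_P$. The resulting block computation then correctly shows that the kernel of the transformed matrix is nontrivial exactly when the system $\sum_j v_j\,\bdy_{\ba{z}_j}f^\nu(a)=0$ has a nonzero solution in $\cplx^n$.

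The paper takes a different and shorter route: it views $M$ \emph{parametrically}, as the image of the regular map $h\colon\rea^{2n}\to\cplx^{2n}$ sending $x$ to $(z(x),f^1(z(x)),\dots,f^n(z(x)))$, and invokes a criterion of Wermer stating that $h(U)$ is totally real at $h(0)$ if and only if the complex Jacobian $\bigl(\bdy h_j/\bdy x_k\bigr)(0)$ has full complex rank $2n$; unpacking that rank condition yields the same $\ba{\bdy}f$ system. Your approach has the merit of being entirely internal to the paper---it uses only the already-proved Lemma~\ref{L:chartotrl} and elementary linear algebra---whereas the paper's approach is more economical but treats Wermer's criterion as a black box.
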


\begin{proof}
The proof follows from the following fact due to Wermer \cite{W3}.
\smallskip

\noindent{\bf Fact.} {\em Let $h_1, \dots, h_m$ be $\smoo^1$-smooth complex valued functions defined 
in a neighbourhood $U$ of $0\in \rea^k$ such that the function $h:=(h_1,\dots, h_m)$ 
is a regular map on $U$ into $\cplx^m$. Let $S:=h(U)$. Then, $S$ is totally real at 
$h(0)$ if and only if the complex rank of the matrix $\begin{pmatrix}
                                                       {\dfrac{\bdy h_j}{\bdy x_k}}(0)
                                                      \end{pmatrix}_{i,j}$ 
is $k$.}

\end{proof}
%...................................... 

%......................................................................................................................................
  \section{The proof of Theorem~\ref{T:totreal_PCVX}}\label{proof:totreal_PCVX}
  %......................................................................................................................................
  
  We begin the proof by constructing a tube-like neighbourhood of the graph and a 
  non-negative plurisubharmonic function defined in it, which vanishes on the graph. 
  This constitutes Step I. Further steps then lead us to showing the desired compact 
  to be polynomially convex on the basis of our construction in the first step.
  \smallskip
  
  {\bf Step I:}{\em Constructing a tube-like neighbourhood $G$ of the graph and a 
  plurisubharmonic function $u$ on $G$.}  
  \smallskip
  
In this case we consider the defining functions:
\[
u_j(z,w)=\begin{cases} 
                   \rl w_j-\rl f_j(z),\; \text{if}\; $j=1,\dots, n$\\
                   \imag w_j -\imag f_j(z),\;\text{if}\; $j=n+1,\dots,2n$.
                   \end{cases}
                   \] and
\[
u(z,w):=\sum_{j=1}^n u_j^2(z,w).
\]
We obtain the Levi form:                  
\begin{align}
 \mathscr{L}u(\bcdot;V) 
&= \sum_{j=1}^n \left((f_j-w_j)\sum_{k,l=1}^n \ba{\bdy^2_{\zbar_k z_l}f_jv_l \ba{v_k}} + (\ba{f_j}- \ba{w_j})
\sum_{k,l=1}^n \bdy^2_{z_k \zbar_l}f^jv_k \ba{v_l} \right) \notag\\
& \qquad + \sum_{k,l=1}^n \left( \sum_j^n \bdy_{z_k}f_j \ba{\bdy_{z_l}f_j}\right)v_k \ba{v_l}
+ \sum_{k,l=1}^n \left( \sum_{j=1}^n \bdy_{\zbar_l}f_j\ba{\bdy_{\zbar_k}f_j} \right) v_k \ba{v_l} \notag\\
& \qquad - \sum_{k,l=1}^n \bdy_{z_k}f_l v_k \ba{t_l}- \sum_{k,l=1}^n \ba{ \bdy_{z_l}f_k v_l \ba{t_k}} +\sum_{j=1}^n |t_j|^2, \notag
\end{align}
where we denote $V=(v,t)=(v_1,\dots,v_n,t_1,\dots,t_n) \in \cplx^{2n}$. 
Swapping the subscripts $j$ and $k$ in the first sum in the second line above allows us to see that:
\begin{align}
 \mathscr{L}u (\bcdot;V) &= 2\sum_{\nu=1}^n \rl{ \left( (\ba{f^\nu}- \ba{w_\nu}) \mathfrak{L}f^{\nu}(z;v) \right)}
+\sum_{j,k=1}^n \left( \sum_{\nu=1}^n f^\nu_{z_j} \ba{f^\nu_{z_k}} \right) v_j \ba{v_k} \notag\\
& \qquad + \sum_{j,k=1}^n \left( \sum_{\nu=1}^n f^\nu_{\zbar_k} \ba{f^\nu_{\zbar_j}} \right) v_j \ba{v_k}
- \sum_{j,k=1}^n f^k_{z_j}v_j \ba{t_k}- \sum_{j,k=1}^n \ba{ f^j_{z_k}v_k \ba{t_j}} +\sum_{\nu=1}^n |t_{\nu}|^2. \notag\\
&= 2\sum_{\nu=1}^n \rl{ \left( (\ba{f^\nu}- \ba{w_\nu}) \mathfrak{L}f^{\nu}(z;v) \right)}
+ \sum_{\nu=1}^n|f^\nu_{z_1}v_1+....+f^\nu_{z_n}v_n-t_\nu|^2 \notag\\
&\qquad \qquad\qquad\qquad\qquad \qquad\qquad\qquad
+\sum_{j,k=1}^n \left( \sum_{\nu=1}^n f^\nu_{\zbar_k} \ba{f^\nu_{\zbar_j}} \right) v_j \ba{v_k} \notag\\
 &= 2\sum_{\nu=1}^n \rl{ \left( (\ba{f^\nu}- \ba{w_\nu}) \mathfrak{L}f^{\nu}(z;v) \right)}
+ \sum_{\nu=1}^n|f^\nu_{z_1}v_1+....+f^\nu_{z_n}v_n-t_\nu|^2 \notag\\
&\qquad \qquad\qquad\qquad\qquad \qquad\qquad\qquad
+\sum_{\nu=1}^n|f^\nu _{\ba{z_1}} \ba{v_1}+ \dots +f^\nu_{\ba{z_n}} \ba{v_n}|^2 \notag\\
& \geq \sum_{\nu=1}^n|f^\nu _{\ba{z_1}} \ba{v_1}+ \dots +f^\nu_{\ba{z_n}} \ba{v_n}|^2 
  - 2\sum_{\nu=1}^n  |f^\nu- w_\nu| |\mathfrak{L}f^{\nu}(z;v)| \label{E:barderivative}.
\end{align}
Let 
\[
L(z):= \max_{\nu}\left( \sup_{||v||=1}  |\mathfrak{L}f^{\nu}(z;v)|  \right) ,
\]
and
\[
 m(z):=\inf_{||v||=1}\left( \sum_{\nu=1}^n|f^\nu _{\ba{z_1}}(z) \ba{v_1}+....+f^\nu_{\ba{z_n}}(z) \ba{v_n}|^2  \right).
\]
Since the graph $S$ is  totally real, by Lemma~\ref{L:totreal}, we have
$m(z)>0$ for all $z \in \cplx^n$. Define
\[
 G:= \left\lbrace (z,w)\in \cplx^{2n}: \sum_{\nu=1}^n|f^\nu(z)-w_\nu|< \frac{m(z)}{2 L(z)} \right\rbrace.
\]
 From \eqref{E:barderivative}, it is clear that $u$ is strictly plurisubharmonic on $G$ and
$u^{-1}\{0\}={\sf Gr}(f^1,...,f^n)$. Since $\om \subset G$ (by hypothesis), we have $u \in {\sf psh}(\om)$
and $K\subset {\sf Gr}_{\cplx^n}(F) \subset u^{-1}\{0\}$.
\smallskip

\noindent {\bf{Step II}}: {\em Showing that $\hull{K} \subset u^{-1} \{0\}$}.
\smallskip

\noindent Since, by Step I,  $u \in {\sf psh}(\om)$ and $K \subset u^{-1}\{0\}$,
all the conditions of Lemma \ref{P:extPsh} are fulfilled with given compact $K$, 
$v:= u$ and $\phi:=\Psi$. Hence, in view of Lemma~\ref{P:extPsh}, we obtain
\[
\hull{K} \subset u^{-1} \{0\}.
\]
\smallskip
 
\noindent {\bf{Step III}}: {\em Completing the proof}.
\smallskip

\noindent The aim of this step is to show that 
$K$ is polynomially convex. For that we consider  $K_1:=\{(z,w)\in{\sf Gr}_{\cplx^n}(F) : \Psi(z,w)+\eps\leq 0\}$,
where
\[
 -\eps:= \sup_K \Psi(z,w).
\]
Clearly, $K\subset K_1$.
Thanks to the fact that $K\subset \om=\{(z,w)\in \cplx^n \times \cplx^n : \Psi(z,w)<0\}$, we get that $\eps>0$.  
$\Psi$ is plurisubharmonic in $\cplx^{2n}$, 
\[
\hull{K_1}\subset \{(z,w)\in \cplx^{2n}: \Psi(z,w)<0\}=\om\subset G .
\] 
By Lemma~\ref{P:extPsh}, with the compact $K_1$, $\Omega:=G$
$v:=u$ and $\phi:=\Psi$, we
conclude that $\hull{K_1}\subset u^{-1}\{0\}=M$. Hence, $K_1$ is polynomially convex.  
Using Lemma~\ref{L:totrlsub}, we conclude that $K$ is polynomially convex.

\smallskip

The converse follows from Result~\ref{R:polyhed}.

%Hence, there exists a point $(z_0,w_0) \in \hull{{\sf Gr}_{\ba{\mathscr{D}}}(F)} \setminus {\sf Gr}_{\ba{\mathscr{D}}}(F)$.
%This implies, in view of Step II and the construction of $U$, that  $z_0 \notin \ba{\mathscr{D}}$. Since
% $\ba{\mathscr{D}}$ is polynomially convex, $z_0 \notin \hull{\ba{\mathscr{D}}}$. Hence, there exists a 
%polynomial $P \in \cplx[z_1, \dots, z_n]$ such that 
%\begin{equation} \label{E:polynomial}
% |P(z_0)| > \sup_{z\in \ba{\mathscr{D}}} |P(z)|.
%\end{equation}
% Defining a polynomial $\wtil{P}$ in $2n$ variables by $\wtil{P}(z,w):=P(z)$ and using \eqref{E:polynomial}, we have 
%\[
% |\wtil{P}(z_0,w_0)| = |P(z_0)| > \sup_{z\in \ba{\mathscr{D}}} |P(z)| = 
%\sup_{(z,w)\in {\sf Gr}_{\ba{\mathscr{D}}}(F)} |\wtil{P}(z,w)|.
%\]
%This contradicts the fact that $(z_0,w_0) \in \hull{{\sf Gr}_{\ba{\mathscr{D}}}(F)}$. 
%Hence ${\sf Gr}_{\ba{\mathscr{D}}}(F)$ is polynomially convex. \qed
%..................................................................................................
\section{Proof of Theorem~\ref{T:totrlpoly}}
%.....................................................................................................

Our proof of Theorem~\ref{T:totrlpoly} follows in lines similar to that of Theorem~\ref{T:totreal_PCVX}. 
Again, using the defining equations,  we will construct a nonnegative plurisubharmonic function in a 
tubular neighbourhood of the given compact subset $K$. 

\begin{proof}[Proof of Theorem~\ref{T:totrlpoly}]
As before, we divide the proof in three steps. 
\smallskip

\noindent {\bf Step I.} {\em Existence of a plurisubharmonic function $u$ on a neighbourhood of K with $K\subset u^{-1}\{0\}$.}
 
Let us define the following function: 
\[
 u(z):= \sum_{j=1}^{2n-k}\rho_j^2(z).
\]
We now compute the Levi-form for the above function $u$. For that, We have 
\begin{equation}\label{E:2ndder}
 \dfrac{\bdy^2 u}{\bdy z_j\bdy \ba{z_k}}(z)
 =2\sum_{l=1}^{2n-k}\rho_l(p)\dfrac{\bdy^2\rho_l}{\bdy z_j\bdy \ba{z_k}}(z) 
 +2\sum_{l=1}^{2n-k} \dfrac{\bdy \rho_l}{\bdy z_j}(p) \dfrac{\bdy \rho_l}{\bdy\ba{z_k}}(z). 
\end{equation}

Hence, the Levi-form of $u$: 
\begin{align}
 \mathscr{L}u(z, v) &= \sum_{j,k=1}^n \dfrac{\bdy^2\rho_p}{\bdy z_j\bdy \ba{z_k}}(z) v_j\ba{v_k} \notag\\
 &= 2\sum_{l=1}^{2n-k}\sum_{j,k=1}^n\dfrac{\bdy^2\rho_l}{\bdy z_j\bdy \ba{z_k}}(z) v_j\ba{v_k} 
 + 2 \sum_{l=1}^{2n-k} \left|\sum_{j=1}^n\dfrac{\bdy \rho_l}{\bdy \ba{z_j}}(z)v_j\right|^2 \notag \\
 &= 2 \sum_{l=1}^{2n-k}\rho_l(z)\mathscr{L}\rho_l(z, v)
 + 2 \sum_{l=1}^{2n-k} \left|\sum_{j=1}^n\dfrac{\bdy \rho_l}{\bdy \ba{z_j}}(z)v_j\right|^2 \notag\\
 & \geq 2\sum_{l=1}^{2n-k} \left|\sum_{j=1}^n\dfrac{\bdy \rho_l}{\bdy \ba{z_j}}(z)v_j\right|^2 
 -2 \sum_{l=1}^{2n-k}|\rho_l(z)||\mathscr{L}\rho_l(z, v)|. \label{E:leviformrho}
\end{align}

Let us define the following set
\[
 \OM:= \left\lbrace z\in \cplx^n : \sum_{l=1}^{2n-k}|\rho_l(z)|<\dfrac{m(z)}{L(z)} \right\rbrace,
\]
where
\[
L(z):=\max_{l} \left(\sup_{||v||=1}|\mathscr{L}\rho_l(z, v)|\right),
\]
and
\[
 m(z):=\inf_{||v||=1}\sum_{l=1}^{2n-k} \left|\sum_{j=1}^n\dfrac{\bdy \rho_l}{\bdy \ba{z_j}}(z)v_j\right|^2.
\]
Since $M$ is totally real, by Lemma~\ref{L:chartotrl}, we get that $m(z)>0$ for $z\in M$. 
From \eqref{E:leviformrho}, we obtain that 
\[
\mathscr{L} u(z, v) \geq 0,\; \text{for all}\; z\in \Omega,
\]

Hence, $u$ is plurisubharmonic in $\OM$ and $K\subset u^{-1}\{0\}$. 
\smallskip
%For every point $p\in K\subset M$ there exists a function $\rho_p$ 
%and a domain $\OM_p$ such that $\rho_p\in {\sf psh}(\OM_p)$. Shrinking $\OM_p$, if necessary, we may assume 
%that $\ba{\OM_p}\subset U_p$. $\{\OM_p\}$ is an open cover of the compact $K$. Hence, there exist 
%$N\in \mathbb{N}$ and points $p_j\in K, 1\leq j\leq N$, such that $K\subset \cup_{j=1}^N\OM_{p_j}$. 
%Let $\{\varphi_j\}_{j=1}^N$ be a partition of unity subordinate to $\{U_{p_j}\}_{j=1}^N$ and 
%$\varphi|_{\ba{\OM_{p_j}}}=1$. Consider the following function: 
%\[h(z):=\sum_{j=1}^N\varphi_j\rho_{p_j}. \]
%By construction, $h$ is plurisubharmonic in $\tau_K:=\cup_{j=1}^N\OM_{p_j}$ such that $K\subset h^{-1}\{0\}$.

\noindent {\bf Step II.} {\em Showing that $\hull{K}\subset u^{-1}\{0\}$}.
\smallskip

Let us denote $\om:=\{z\in \cplx^n : \phi(z)<0\}$. By the assumption $u$ is plurisubharmonic in 
$\om$. Invoking Lemma~\ref{P:extPsh} again with $\OM:=\om$, $v:=u$, we get that
\[
\hull{K}\subset u^{-1}\{0\}.
\]
\noindent {\bf Step III.} {\em Completing the proof}.
\smallskip

As in the proof of Theorem~\ref{T:totreal_PCVX} we consider
\[
K_1:=M\cap \{z\in \cplx^n : \phi(z)+\eps\leq 0\},
\]
where $-\eps=\sup_{K}\phi(z)$. The remaining part of the proof goes in the same way as in 
Step III of the proof of Theorem~\ref{T:totreal_PCVX}.
%Thanks to the fact that $K\subset \om=\{(z,w)\in \cplx^n \times \cplx^n : \Psi(z,w)<0\}$, $\eps>0$. 
%Clearly, $K\subset K_1$. Since 
%$\Psi$ is plurisubharmonic in $\cplx^{2n}$, 
%\[
%\hull{K_1}\subset \{(z,w)\in \cplx^{2n}: \Psi(z,w)<0\}=\om\subset G .
%\] 
%By Lemma~\ref{P:extPsh}, with the compact $K_1$,
%$v:=U$ and $\phi:=\Psi$, we
%conclude that $\hull{K_1}\subset U^{-1}\{0\}=M$. Hence, $K_1$ is polynomially convex.  
%Using Lemma~\ref{L:totrlsub}, we conclude that $K$ is polynomially convex.

As before, the converse follows from Result~\ref{R:polyhed}
\end{proof}
%--------------------
% Section - Examples
%..........................
\section {Examples}\label{S:example}
In this section we provide a couple of examples of totally-real submanifolds of $\cplx^2$: 
the first one is given by H\"{o}rmander-Wermer \cite{HW}. 
\begin{example}
We consider the graph
$K=\{(z,f(z))\in\cplx^2: z\in\ba{\dsc}\}$
over the closed unit disc $\ba{\dsc}$, where $f(z)=-(1+i)\ba{z}+iz\ba{z}^2+z^2\ba{z}^3$. It is shown in \cite{HW}, 
by attaching an analytic disc to $\{(z,f(z)): |z|=1\}\subset K$, that $K$ is not polynomially convex. 
Here we focus on the closed subsets of the compact $K$ of the form:
\[
K_r:=\{(z,f(z))\in \cplx^2: |z|\leq r\}.
\] 
Since $K$ is a subset of a totally-real submanifold $M=\{(z,f(z)):z\in\cplx\}$, we already know 
that there exists an $r>0$ such that $\hull{K_r}=K_r$. Here a range for $r$ is deduced for which $K_r$ 
is polynomially convex.
We use Theorem~\ref{T:totreal_PCVX} to show that $K_r$ is 
polynomially convex for all $r\in [0, 1/\sqrt{3}]$.
\smallskip

Let us first compute:
\begin{align}
\dfrac{\partial f}{\partial \ba{z}}(z)&= -(1+i)+2i|z|^2+3|z|^4, \notag\\
\dfrac{\partial^2	f}{\partial z\partial \ba{z}}(z)&=2\ba{z}(i+3|z|^2). \notag
\end{align}
Hence, in the notation of Theorem~\ref{T:totreal_PCVX}, we have 
\begin{align}
L(z)&= \left |\dfrac{\partial^2 f}{\partial z\partial\ba{z}}(z)\right|=2|z|\sqrt{1+9|z|^4} \notag\\
m(z)&=\left|\dfrac{\partial f}{\partial\ba{z}}(z)\right|^2=9|z|^8-2|z|^4-4|z|^2+2. \label{E:modFzbar} 
\end{align} 
We get a neighbourhood of $K$ as 
\[
\Omega:=\left\{(z,w)\in\cplx^2:|w-f(z)|<\dfrac{9|z|^8-2|z|^4-4|z|^2+2}{2|z|\sqrt{1+9|z|^4}}\right\},
\]
in which $u(z,w):=|w-f(z)|^2$ is plurisubharmonic.
We now note that the function $h(r):=9r^8-2r^4-4r^2+2$ is monotonically decreasing in 
the interval $[0,1/\sqrt{3}]$. Hence, from \eqref{E:modFzbar} we have 
\begin{equation*}
\inf_{|z|\leq 1/\sqrt{3}} m(z)= 1.
\end{equation*} 
We also have 
\[
\sup_{|z|\leq 1/\sqrt{3}}L(z)=\dfrac{4}{3\sqrt[4]{3}}.
\]
Hence, we get that the open set $\{(z,w)\in\cplx^2:|z|<1/\sqrt{3}, |w-f(z)|<3\sqrt[4]{3}/4\}\subset \Omega$. 
The implies that
\[
\{(z,w)\in\cplx^2: |z|<1/\sqrt{3}, |w|<|f(z)|+3\sqrt[4]{3}/4\}\subset \Omega.
\]
Since $\sup_{|z|\leq 1/\sqrt{3}}|f(z)|<3\sqrt[4]{3}/4$, therefore, we obtain:
\[
K_r\Subset D(0;1/\sqrt{3})\times D(0;3\sqrt[4]{3}/4) \subset \Omega,\qquad \forall r\in [0,1/\sqrt{3}].
\]
Since an open bidisc in $\cplx^2$ is always a sub-level set of some plurisubharmonic function 
on $\cplx^2$, therefore, by using Theorem~\ref{T:totreal_PCVX}, 
we conclude that $\hull{K_r}=K_r$ for all $r\in [0,1/\sqrt{3}]$.
\end{example}
\smallskip

\begin{example}
Let us consider the following graph in $\CC$ over $\rea^2$:
\[
M=\{(x_1+ic(x_1^2+x_2^3), x_2+id(x_2^2+x_1^3))\in\cplx^2: x_1,x_2\in\rea\},
\]
where $0\leq c, d\leq\dfrac{1}{20}$.
We show that the compact $K:=M\cap\ba{\dsc^2}$ is polynomially convex.
In this case we have $\rho:=(\rho_1,\rho_2)$, where
\begin{align*}
\rho_1(z_1,z_2)&:=\dfrac{1}{2i}(z_1-\ba{z_1})-\dfrac{c}{4}\left((z_1+\ba{z_1})^2+\dfrac{1}{2}(z_2+\ba{z_2})^3\right) \\
\rho_2(z_1,z_2) &:=\dfrac{1}{2i}(z_2-\ba{z_2})-\dfrac{d}{4}\left(\dfrac{1}{2}(z_2+\ba{z_2})^2+(z_1+\ba{z_1})^3\right).
\end{align*}
Clearly $K=\rho^{-1}\{0\}\cap\ba{\dsc^2}$. Using the notation $z_j=x_j+iy_j$, $j=1,2$, we compute:
\begin{align*}
\dfrac{\partial\rho_1}{\partial\ba{z_1}}(z)&=i/2-cx_1,\qquad
\dfrac{\partial\rho_2}{\partial\ba{z_2}}(z)=i/2-dx_2,\\
\dfrac{\partial\rho_1}{\partial\ba{z_2}}(z)&= -\dfrac{3cx_2^2}{2},\qquad
\dfrac{\partial\rho_2}{\partial\ba{z_1}}(z)=-\dfrac{3dx_1^2}{2},\\
\dfrac{\partial^2\rho_1}{\partial z_1\partial\ba{z_1}}(z)&= -c/2,\qquad 
\dfrac{\partial^2\rho_1}{\partial z_1\partial\ba{z_2}}(z)=0,\qquad
\dfrac{\partial^2\rho_1}{\partial z_2\partial\ba{z_2}}(z)=-\dfrac{3cx_2}{2},\\
\dfrac{\partial^2\rho_2}{\partial z_1\partial\ba{z_1}}(z)&= -\dfrac{3dx_1}{2}, \qquad
\dfrac{\partial^2\rho_2}{\partial z_1\partial\ba{z_2}}(z)=0,\qquad
\dfrac{\partial^2\rho_2}{\partial z_2\partial\ba{z_2}}(z)=-d/2.
\end{align*}
From the above computation we get that:
\begin{align*}
L(z)&=\max_{l=1,2}\left( \sup_{||v||=1}\left|\dfrac{\partial^2\rho_l}{\partial z_1 \partial \ba{z_1}}(z)|v_1|^2 
+2\rl \left(\dfrac{\partial^2\rho_l}{\partial z_1 \partial \ba{z_2}}(z)v_1\ba{v_2}\right) + 
\dfrac{\partial^2\rho_l}{\partial z_2 \partial \ba{z_2}}(z)|v_2|^2\right|\right)\\
&\leq 2 \max\{c, d\}\\
m(z)& \geq \dfrac{1}{4} +\inf_{||v||=1}\left[ c^2\left(x_1v_1+\dfrac{3}{2} x_2^2v_2\right)^2
+d^2\left(x_2v_2+\dfrac{3}{2}x_1^2v_1\right)^2\right].
\end{align*}
Hence, the neighbourhood 
\[
\Omega=\left\{z\in\cplx^2: |\rho_1(z)|+|\rho_2(z)|<\dfrac{m(z)}{L(z)}\right\}
\]
of $K$ contains the open set 
\[
\left\{z\in\cplx^n: |y_1-c(x_1^2+x_2^3)|+|y_2-d(x_2^2+x_1^3)|<\dfrac{1}{8\max\{c,d\}}\right\},
\]
and $u(z):=\rho_1^2(z)+\rho_2^2(z)$ is plurisubharmonic in $\Omega$.
We now note that 
\[
K\Subset D(0;1+\varepsilon)\times D(0;1+\varepsilon)\subset\Omega,
\]
where $0<\varepsilon<1/20$.
Therefore, by applying Theorem~\ref{T:totrlpoly}, we conclude that $K$ is polynomially convex.
\end{example}

\noindent {\bf Acknowledgements.} I would like thank Gautam Bharali for the long discussions 
that we had on an earlier version of Theorem~\ref{T:totreal_PCVX} during my Ph.D. 
years. I would like to thank Stat-Math Unit of Indian Statistical Institute, Bangalore Centre 
for providing active support and excellent facilities.

\end{document}